\newtheorem{theorem}{Theorem}
\theoremstyle{plain}
\newtheorem{definition}{Definition}
\newtheorem{lemma}{Lemma}
\newtheorem{remark}{Remark}
\numberwithin{equation}{section}
\begin{document}
\title[Inequalities of Hermite-Hadamard-Fejer type for convex functions]{On
new inequalities of Hermite-Hadamard-Fejer type for convex functions via
fractional integrals}
\author{Erhan SET$^{\blacklozenge }$}
\address{$^{\blacklozenge }$Department of Mathematics, Faculty of Arts and
Sciences, Ordu University, 52200, Ordu, Turkey}
\email{erhanset@yahoo.com}
\author{\.{I}mdat \.{I}\c{s}can$^{\blacktriangledown }$}
\address{$^{\blacktriangledown }$Department of Mathematics, Faculty of Arts
and Sciences, Giresun University, 28100, Giresun, Turkey.}
\email{imdat.iscan@giresun.edu.tr, imdati@yahoo.com}
\author{M. Zeki Sarikaya$^{\blacktriangle }$}
\address{$^{\blacktriangle }$Department of Mathematics, Faculty of Arts and
Sciences, D\"{u}zce University, 52200, D\"{u}zce, Turkey}
\email{sarikayamz@gmail.com}
\author{M. Emin \"{O}zdemir$^{\blacksquare }$}
\address{$^{\blacksquare }$Atat\"{u}rk University, K.K. Education Faculty,
Department of Mathematics, 25240, Campus, Erzurum, Turkey}
\email{emos@atauni.edu.tr}
\subjclass[2000]{ 26A51, 26A33, 26D10. }
\keywords{Convex function, Hermite-Hadamard inequality,
Hermite-Hadamard-Fejer inequality, Riemann-Liouville fractional integral.}

\begin{abstract}
In this paper, we establish some weighted fractional inequalities for
differentiable mappings whose derivatives in absolute value are convex.
These results are connected with the celebrated Hermite-Hadamard-Fej\'{e}r
type integral inequality. The results presented here would provide
extensions of those given in earlier works.
\end{abstract}

\maketitle

\section{Introduction}

Throughout this paper, let $I$ be an interval on $\mathbb{%
\mathbb{R}
}$ and let $\left\Vert g\right\Vert _{\left[ a,b\right] ,\infty }=\sup_{t\in %
\left[ a,b\right] }\left\vert g(x)\right\vert $, for the continuous function 
$g:\left[ a,b\right] \mathbb{\rightarrow 
\mathbb{R}
}$.

Let $f:I\mathbb{\rightarrow R}$ be a convex function defined on the interval 
$I$ of real numbers and $a,b\in I$ with $a<b$. The following inequality
holds:%
\begin{equation}
f\left( \frac{a+b}{2}\right) \leq \frac{1}{b-a}\dint\limits_{a}^{b}f(x)dx%
\leq \frac{f(a)+f(b)}{2}.  \label{eq1}
\end{equation}%
This double inequality is known in the literature as Hermite-Hadamard
integral inequality for convex functions \cite{H93}.

In order to prove some inequalities related to Hermite Hadamard inequality,
K\i rmac\i\ used the following lemma:

\begin{lemma}
\label{l.1.1.} (\cite{Krmc}) Let $f:I^{\circ }\rightarrow \mathbb{R}$ be a
differentiable mapping on $I^{\circ },$ $a,b\in I^{\circ }$ with $a<b$. If $%
f^{\prime }\in L\left[ a,b\right] ,$ then we have 
\begin{eqnarray}
&&\frac{1}{b-a}\dint\limits_{a}^{b}f(x)dx-f\left( \frac{a+b}{2}\right)
\label{eq2} \\
&=&\left( b-a\right) \int_{0}^{\frac{1}{2}}tf^{\prime }(ta+(1-t)b)dt+\int_{%
\frac{1}{2}}^{1}(t-1)f^{\prime }(ta+(1-t)b)dt.  \notag
\end{eqnarray}
\end{lemma}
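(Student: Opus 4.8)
The plan is to verify the identity by working from the right-hand side, evaluating each of the two integrals by integration by parts and then combining them via a single change of variables. The key elementary observation is that $\frac{d}{dt}f(ta+(1-t)b) = (a-b)f'(ta+(1-t)b)$, so that $\frac{1}{a-b}f(ta+(1-t)b)$ is an antiderivative of the kernel $f'(ta+(1-t)b)$. This is exactly what makes integration by parts natural: in the first integral I would take $u=t$ and $dv = f'(ta+(1-t)b)\,dt$, and in the second I would take $u=t-1$ with the same $dv$. The hypothesis $f'\in L[a,b]$ is all that is needed to legitimize these manipulations.

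First I would integrate by parts on $\int_0^{1/2} t\,f'(ta+(1-t)b)\,dt$. The boundary term vanishes at $t=0$ because of the factor $t$, and contributes $\frac{1}{2(a-b)}f\!\left(\frac{a+b}{2}\right)$ at $t=\frac12$, leaving the tail integral $-\frac{1}{a-b}\int_0^{1/2} f(ta+(1-t)b)\,dt$. Performing the same computation on $\int_{1/2}^{1}(t-1)f'(ta+(1-t)b)\,dt$, the boundary term now vanishes at $t=1$ because of the factor $t-1$, again produces $\frac{1}{2(a-b)}f\!\left(\frac{a+b}{2}\right)$ at $t=\frac12$, and leaves the tail integral $-\frac{1}{a-b}\int_{1/2}^{1} f(ta+(1-t)b)\,dt$.

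Second I would add the two results. The two half-contributions of $f\!\left(\frac{a+b}{2}\right)$ combine into a full $\frac{1}{a-b}f\!\left(\frac{a+b}{2}\right)$, while the two tail integrals merge into $-\frac{1}{a-b}\int_0^{1} f(ta+(1-t)b)\,dt$. Multiplying through by the prefactor $(b-a)=-(a-b)$ flips these signs, giving $-f\!\left(\frac{a+b}{2}\right)+\int_0^1 f(ta+(1-t)b)\,dt$. A final substitution $x=ta+(1-t)b$, so that $dx=(a-b)\,dt$ and the limits $t=0,1$ map to $x=b,a$, converts the remaining integral into $\frac{1}{b-a}\int_a^b f(x)\,dx$, which is precisely the left-hand side of \eqref{eq2}.

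I expect the only delicate point to be the bookkeeping of signs: the chain-rule factor $(a-b)$ is negative, the change of variables reverses the orientation of the interval, and the prefactor $(b-a)$ must be combined consistently with both. There is no genuine analytic obstacle here—no convexity and no smoothness beyond integrability of $f'$ is required—so the whole argument reduces to tracking the boundary evaluations at $t=\tfrac12$ together with the cancellations at the endpoints $t=0$ and $t=1$.
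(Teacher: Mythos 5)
Your argument is correct, and it is in fact the standard proof of this identity from K\i rmac\i's original paper; the present paper merely quotes the lemma from \cite{Krmc} without reproducing any proof, so there is nothing in the text to compare against. Your bookkeeping checks out: with $v(t)=\tfrac{1}{a-b}f(ta+(1-t)b)$ as antiderivative of the kernel, the two boundary terms each contribute $\tfrac{1}{2(a-b)}f\bigl(\tfrac{a+b}{2}\bigr)$, the tail integrals merge into $-\tfrac{1}{a-b}\int_0^1 f(ta+(1-t)b)\,dt$, and multiplying by $b-a=-(a-b)$ together with the substitution $x=ta+(1-t)b$ yields exactly $\tfrac{1}{b-a}\int_a^b f(x)\,dx-f\bigl(\tfrac{a+b}{2}\bigr)$. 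One point worth making explicit: your derivation proves the identity with $(b-a)$ multiplying the \emph{sum} of the two integrals, whereas the display \eqref{eq2} as typeset attaches $(b-a)$ only to the first integral; that is a bracketing typo in the statement (inherited from how it is transcribed here), and your computation correctly identifies where the factor must go. The only analytic point you wave at rather than state is that integration by parts and the fundamental theorem of calculus are legitimate under the hypothesis that $f$ is everywhere differentiable with $f'\in L[a,b]$ (which does imply absolute continuity of $f$); this is standard and is glossed over in the source as well.
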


\begin{theorem}
\label{t.1.1} (\cite{Krmc}) Let $f:I^{\circ }\rightarrow \mathbb{R}$ be a
differentiable mapping on $I^{\circ },$ $a,b\in I^{\circ }$ with $a<b$. If $%
\left\vert f^{\prime }\right\vert $ is convex on $\left[ a,b\right] $, then
we have 
\begin{equation}
\left\vert \frac{1}{b-a}\dint\limits_{a}^{b}f(x)dx-f\left( \frac{a+b}{2}%
\right) \right\vert \leq \frac{b-a}{8}\left( \left\vert f^{\prime
}(a)\right\vert +\left\vert f^{\prime }(b)\right\vert \right) .  \label{eq3}
\end{equation}
\end{theorem}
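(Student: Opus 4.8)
The plan is to derive the estimate directly from the integral identity in Lemma~\ref{l.1.1.}. Starting from that representation of $\frac{1}{b-a}\int_a^b f(x)\,dx - f\!\left(\frac{a+b}{2}\right)$, I would take absolute values on both sides and apply the triangle inequality for integrals, bounding the absolute value of the sum of the two integrals by the sum of the integrals of the absolute values. On the interval $\left[0,\frac12\right]$ the weight $t$ is nonnegative, so $|t|=t$, while on $\left[\frac12,1\right]$ we have $|t-1|=1-t$; making these replacements removes the absolute value signs coming from the kernel and leaves only the factor $\left|f'(ta+(1-t)b)\right|$ to be controlled.

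The key analytic input is the convexity of $|f'|$. Since $ta+(1-t)b$ is a convex combination of $a$ and $b$ for $t\in[0,1]$, convexity gives the pointwise bound $\left|f'(ta+(1-t)b)\right|\le t\,|f'(a)|+(1-t)\,|f'(b)|$. Inserting this into each of the two integrals reduces the problem to evaluating four elementary polynomial integrals, namely $\int_0^{1/2} t^2\,dt$, $\int_0^{1/2} t(1-t)\,dt$, $\int_{1/2}^1 (1-t)t\,dt$, and $\int_{1/2}^1 (1-t)^2\,dt$ (the last handled cleanly by the substitution $u=1-t$).

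Finally I would collect the coefficients of $|f'(a)|$ and $|f'(b)|$. The two contributions to the coefficient of $|f'(a)|$ sum to $\frac{1}{24}+\frac{1}{12}=\frac18$, and by symmetry the coefficient of $|f'(b)|$ is also $\frac18$; factoring out the $b-a$ coming from the lemma then yields exactly $\frac{b-a}{8}\left(|f'(a)|+|f'(b)|\right)$, which is \eqref{eq3}. There is no serious obstacle here—the argument is a routine combination of the triangle inequality and convexity—so the only point requiring care is the bookkeeping: correctly resolving $|t|$ and $|t-1|$ on the two subintervals and tracking which polynomial integral multiplies which of $|f'(a)|$ and $|f'(b)|$ so that the fractions add up to the claimed constant $\frac18$.
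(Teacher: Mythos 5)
Your proposal is correct: it combines Lemma~\ref{l.1.1.} with the triangle inequality and the convexity bound $\left|f'(ta+(1-t)b)\right|\le t|f'(a)|+(1-t)|f'(b)|$, and the four elementary integrals do yield the coefficients $\tfrac{1}{24}+\tfrac{1}{12}=\tfrac18$ for each of $|f'(a)|$ and $|f'(b)|$. The paper itself only cites this theorem from K\i rmac\i\ without reproducing a proof, but it states Lemma~\ref{l.1.1.} precisely as the tool used to prove it, and your argument is exactly that standard route.
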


\begin{theorem}
\label{t.1.2} (\cite{Krmc}) Let $f:I^{\circ }\rightarrow \mathbb{R}$ be a
differentiable mapping on $I^{\circ },$ $a,b\in I^{\circ }$ with $a<b,$ and
let $p>1.$ If the mapping $\left\vert f^{\prime }\right\vert ^{p/p-1}$ is
convex on $\left[ a,b\right] ,$ then we have 
\begin{eqnarray}
&&\left\vert \frac{1}{b-a}\dint\limits_{a}^{b}f(x)dx-f\left( \frac{a+b}{2}%
\right) \right\vert  \label{eq4} \\
&\leq &\frac{b-a}{16}\left( \frac{4}{p+1}\right) ^{\frac{1}{p}}\left[ \left(
\left\vert f^{\prime }(a)\right\vert ^{p/p-1}+3\left\vert f^{\prime
}(b)\right\vert ^{p/p-1}\right) ^{\left( p-1\right) /p}\right.  \notag \\
&&\left. +\left( 3\left\vert f^{\prime }(a)\right\vert ^{p/p-1}+\left\vert
f^{\prime }(b)\right\vert ^{p/p-1}\right) ^{\left( p-1\right) /p}\right] . 
\notag
\end{eqnarray}
\end{theorem}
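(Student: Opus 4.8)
The plan is to start from the Kırmacı identity of Lemma \ref{l.1.1.}, pass to absolute values, and then bound each of the two resulting integrals by Hölder's inequality combined with the convexity hypothesis on $\left\vert f^{\prime }\right\vert ^{p/(p-1)}$. First, taking absolute values in \eqref{eq2} and using the triangle inequality together with $\left\vert t-1\right\vert =1-t$ for $t\in\lbrack 1/2,1]$, I would obtain
\begin{align*}
&\left\vert \frac{1}{b-a}\int_{a}^{b}f(x)dx-f\left( \frac{a+b}{2}\right) \right\vert \\
&\leq (b-a)\left[ \int_{0}^{1/2}t\left\vert f^{\prime }(ta+(1-t)b)\right\vert dt+\int_{1/2}^{1}(1-t)\left\vert f^{\prime }(ta+(1-t)b)\right\vert dt\right] .
\end{align*}

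Next, I would apply Hölder's inequality to each integral, keeping the \emph{full} weight ($t$ in the first, $1-t$ in the second) inside the $L^{p}$-factor; writing $q=p/(p-1)$ for the conjugate exponent, this yields for the first integral
$$
\int_{0}^{1/2}t\left\vert f^{\prime }(ta+(1-t)b)\right\vert dt\leq \left( \int_{0}^{1/2}t^{p}\,dt\right) ^{1/p}\left( \int_{0}^{1/2}\left\vert f^{\prime }(ta+(1-t)b)\right\vert ^{q}dt\right) ^{1/q},
$$
and analogously for the second. The elementary computation $\int_{0}^{1/2}t^{p}\,dt=\int_{1/2}^{1}(1-t)^{p}\,dt=\frac{1}{2^{p+1}(p+1)}$ disposes of the $L^{p}$-factors. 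For the remaining factors I would invoke convexity of $\left\vert f^{\prime }\right\vert ^{q}$, namely $\left\vert f^{\prime }(ta+(1-t)b)\right\vert ^{q}\leq t\left\vert f^{\prime }(a)\right\vert ^{q}+(1-t)\left\vert f^{\prime }(b)\right\vert ^{q}$, and use $\int_{0}^{1/2}t\,dt=\frac{1}{8}$, $\int_{0}^{1/2}(1-t)\,dt=\frac{3}{8}$ (with the values reversed on $[1/2,1]$) to arrive at
$$
\int_{0}^{1/2}\left\vert f^{\prime }\right\vert ^{q}dt\leq \frac{1}{8}\left( \left\vert f^{\prime }(a)\right\vert ^{q}+3\left\vert f^{\prime }(b)\right\vert ^{q}\right) ,\qquad \int_{1/2}^{1}\left\vert f^{\prime }\right\vert ^{q}dt\leq \frac{1}{8}\left( 3\left\vert f^{\prime }(a)\right\vert ^{q}+\left\vert f^{\prime }(b)\right\vert ^{q}\right) .
$$

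Finally I would assemble the pieces. I expect the \textbf{main obstacle} to be nothing conceptual but rather the bookkeeping of exponents: collecting the factor $\left( 2^{p+1}(p+1)\right) ^{-1/p}$ from the $L^{p}$-part together with $(1/8)^{1/q}=8^{-(p-1)/p}$ from the convexity part. Writing $8=2^{3}$ and $1/q=(p-1)/p$, the powers of $2$ combine as $\frac{p+1}{p}+\frac{3(p-1)}{p}=4-\frac{2}{p}$, so that $2^{-(p+1)/p}\cdot 8^{-(p-1)/p}=2^{-4}\cdot 2^{2/p}=\frac{1}{16}\,4^{1/p}$. Together with the leftover $(p+1)^{-1/p}$ this produces exactly the claimed prefactor $\frac{b-a}{16}\left( \frac{4}{p+1}\right) ^{1/p}$ multiplying the bracketed sum of the two $(p-1)/p$-powers, which completes the proof.
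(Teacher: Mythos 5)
Your proposal is correct: the paper itself states this theorem without proof (it is quoted from K\i rmac\i 's paper \cite{Krmc}), and your argument --- the identity of Lemma \ref{l.1.1.}, H\"{o}lder's inequality with the full weight in the $L^{p}$ factor, convexity of $\left\vert f^{\prime }\right\vert ^{p/(p-1)}$, and the exponent bookkeeping $2^{-(p+1)/p}\cdot 8^{-(p-1)/p}=\frac{1}{16}4^{1/p}$ --- is exactly the standard proof from that source and all the computations check out.
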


The most well known inequalities connected with the integral mean of a
convex functions are Hermite Hadamard inequalities or its weighted versions,
the so-called Hermite-Hadamard-Fej\'{e}r inequalities. In \cite{F06}, Fej%
\'{e}r established the following Fej\'{e}r inequality which is the weighted
generalization of Hermite-Hadamard inequality (\ref{eq1}).

\begin{theorem}
Let $f:I\mathbb{\rightarrow R}$ be a convex on $I$ and let $a,b\in I$ with $%
a<b$. Then the inequality 
\begin{equation}
f\left( \frac{a+b}{2}\right) \dint\limits_{a}^{b}g(x)dx\leq
\dint\limits_{a}^{b}f(x)g(x)dx\leq \frac{f(a)+f(b)}{2}\dint%
\limits_{a}^{b}g(x)dx  \label{eq00001}
\end{equation}%
holds, where $g:\left[ a,b\right] \rightarrow \mathbb{R}$ is nonnegative,
integrable, and symmetric to $\frac{a+b}{2}.$
\end{theorem}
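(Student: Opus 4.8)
The plan is to establish the two inequalities separately, in each case combining the convexity of $f$ with the symmetry relation $g(a+b-x)=g(x)$ for $x\in[a,b]$ (which is precisely what it means for $g$ to be symmetric to $\frac{a+b}{2}$) together with the change of variables $x\mapsto a+b-x$.

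For the right-hand inequality, I would first write each point $x\in[a,b]$ as the convex combination $x=\frac{b-x}{b-a}a+\frac{x-a}{b-a}b$, so that convexity of $f$ gives the pointwise bound $f(x)\le\frac{b-x}{b-a}f(a)+\frac{x-a}{b-a}f(b)$. Multiplying by $g(x)\ge 0$ and integrating over $[a,b]$ reduces the claim to controlling the two weighted moments $\int_a^b(b-x)g(x)\,dx$ and $\int_a^b(x-a)g(x)\,dx$. The key step is to apply the substitution $x\mapsto a+b-x$ together with the symmetry of $g$ to show that these two moments are equal; their common value is then $\frac{b-a}{2}\int_a^b g(x)\,dx$, which produces exactly the factor $\frac{f(a)+f(b)}{2}$ and yields the asserted upper bound.

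For the left-hand inequality, I would instead pair each point $x$ with its reflection $a+b-x$ and use convexity at the midpoint in the form $f\left(\frac{a+b}{2}\right)\le\frac12\left[f(x)+f(a+b-x)\right]$. After multiplying by $g(x)\ge 0$ and integrating, the reflected integral $\int_a^b f(a+b-x)g(x)\,dx$ is transformed back into $\int_a^b f(x)g(x)\,dx$ by the same change of variables and the invariance of $g$ under reflection, so that the right-hand side collapses to $\int_a^b f(x)g(x)\,dx$ and the lower bound follows.

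The argument is elementary once the symmetry is exploited; the main (though mild) obstacle is to carry out the substitution $x\mapsto a+b-x$ cleanly and to verify that the reflection leaves $g$ invariant, since this single fact is what makes the two weighted moments coincide in the first part and the reflected integral fold back onto itself in the second. The nonnegativity of $g$ is essential throughout, as it preserves the direction of the inequalities when the pointwise convexity estimates are multiplied by $g(x)$ and integrated.
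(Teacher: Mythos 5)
Your argument is correct: the pointwise convexity estimate $f(x)\le\frac{b-x}{b-a}f(a)+\frac{x-a}{b-a}f(b)$ combined with the reflection $x\mapsto a+b-x$ handles the right-hand inequality, and the midpoint estimate $f\left(\frac{a+b}{2}\right)\le\frac12\left[f(x)+f(a+b-x)\right]$ with the same substitution handles the left-hand one; the nonnegativity of $g$ is used exactly where you say it is. Note, however, that the paper itself gives no proof of this statement --- it is quoted as Fej\'{e}r's classical inequality with a citation to the 1906 paper --- so there is no in-paper argument to compare against; yours is the standard proof of this classical result.
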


In \cite{S12a}, Sarikaya established some inequalities of
Hermite-Hadamard-Fej\'{e}r type for differentiable convex functions using
the following lemma:

\begin{lemma}
\label{l.1.2} Let $f:I^{\circ }\rightarrow \mathbb{R}$ be a differentiable
mapping on $I^{\circ },$ $a,b\in I^{\circ }$ with $a<b$, and $g:\left[ a,b%
\right] \rightarrow \mathbb{[}0,\mathbb{\infty )}$ be a differentiable
mapping. If $f^{\prime }\in L\left[ a,b\right] ,$ then the following
identity holds:%
\begin{equation}
\frac{1}{b-a}\dint\limits_{a}^{b}f(x)g(x)dx-\frac{1}{b-a}f\left( \frac{a+b}{2%
}\right) \dint\limits_{a}^{b}g(x)dx=\left( b-a\right)
\int_{0}^{1}k(t)f^{\prime }(ta+(1-t)b)dt  \label{eq00002}
\end{equation}%
for each $t\in \left[ 0,1\right] ,$ where%
\begin{equation*}
k(t)=\QDATOPD\{ . {\int_{0}^{1}w(as+(1-s)b)ds,\text{ \ \ \ \ \ \ }t\in \left[
0,\frac{1}{2}\right) }{-\int_{0}^{1}w(as+(1-s)b)ds,\text{ \ \ \ \ }t\in %
\left[ \frac{1}{2},1\right] .}
\end{equation*}
\end{lemma}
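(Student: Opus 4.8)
The plan is to establish the identity by starting from the right-hand side and integrating by parts, turning the factor $f'(ta+(1-t)b)$ into a total derivative in $t$. The key observation is that
\[
\frac{d}{dt}f(ta+(1-t)b)=(a-b)\,f'(ta+(1-t)b),
\]
so $f'(ta+(1-t)b)$ is, up to the constant $1/(a-b)$, the $t$-derivative of the composite $t\mapsto f(ta+(1-t)b)$; this is what lets the prefactor $(b-a)$ be absorbed cleanly. Since $k$ is defined piecewise with a break at $t=\tfrac12$, I would split $\int_0^1 k(t)f'(ta+(1-t)b)\,dt$ as $\int_0^{1/2}+\int_{1/2}^1$ and treat each subinterval on its own.

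On each subinterval I would integrate by parts with $u=k(t)$ and $dv=f'(ta+(1-t)b)\,dt$, so that $v=\tfrac{1}{a-b}f(ta+(1-t)b)$ and $du=k'(t)\,dt$. By the fundamental theorem of calculus, $k$ being on each piece an integral of $s\mapsto g(sa+(1-s)b)$ with a variable endpoint, one has $k'(t)=g(ta+(1-t)b)$ on both $[0,\tfrac12)$ and $(\tfrac12,1]$. Multiplying through by $(b-a)$ cancels the factor $1/(a-b)=-1/(b-a)$ carried by $v$ and leaves, on each piece, a boundary term plus $\int g(ta+(1-t)b)\,f(ta+(1-t)b)\,dt$.

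The heart of the argument is the bookkeeping of the boundary terms. Here I would use that $k$ vanishes at the endpoints, $k(0)=k(1)=0$, so the only surviving contributions come from $t=\tfrac12$; the boundary pieces there assemble into $\bigl(k(\tfrac12^+)-k(\tfrac12^-)\bigr)f\!\left(\tfrac{a+b}{2}\right)$, and the jump of $k$ across $\tfrac12$ is exactly $-\int_0^1 g(sa+(1-s)b)\,ds$. Thus the coefficient of $f\!\left(\tfrac{a+b}{2}\right)$ collapses to that same quantity. Finally I would undo the parametrization via $x=ta+(1-t)b$, $dx=(a-b)\,dt$, which sends $\int_0^1 g(ta+(1-t)b)f(ta+(1-t)b)\,dt$ to $\tfrac{1}{b-a}\int_a^b f(x)g(x)\,dx$ and $\int_0^1 g(sa+(1-s)b)\,ds$ to $\tfrac{1}{b-a}\int_a^b g(x)\,dx$, producing the left-hand side.

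I expect the main obstacle to be precisely this matching of the jump of $k$ at $t=\tfrac12$ with the midpoint term $f\!\left(\tfrac{a+b}{2}\right)$: the signs in $v=\tfrac{1}{a-b}f$ and in the two one-sided limits $k(\tfrac12^\pm)$ must be tracked carefully, since a single sign error there destroys the cancellation that produces the clean coefficient. Once those boundary terms are pinned down, the concluding change of variables is entirely routine.
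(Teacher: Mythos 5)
Your proof is correct and is essentially the paper's own argument: the paper does not prove this lemma directly (it quotes it from Sarikaya \cite{S12a}), but its Lemma \ref{2.3} is proved in exactly this way --- split at the midpoint, integrate by parts on each half with the antiderivative $\frac{1}{a-b}f(ta+(1-t)b)$, collect the two boundary contributions at $t=\tfrac{1}{2}$, and undo the parametrization --- and Remark \ref{r1} records that the present identity is the case $\alpha=1$. One caveat: your steps $k'(t)=g(ta+(1-t)b)$ and $k(0)=k(1)=0$ presuppose that the kernel is $k(t)=\int_0^t g(as+(1-s)b)\,ds$ on $\left[0,\tfrac{1}{2}\right)$ and $k(t)=-\int_t^1 g(as+(1-s)b)\,ds$ on $\left[\tfrac{1}{2},1\right]$; as printed in the statement both pieces are $\pm\int_0^1 w(as+(1-s)b)\,ds$, a $t$-independent constant (and written with $w$ where $g$ is meant), for which $k'\equiv 0$, $k(0)\neq 0$, and the identity fails. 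You have silently repaired an evident misprint, and for the corrected kernel your bookkeeping at $t=\tfrac{1}{2}$ (jump equal to $-\int_0^1 g(as+(1-s)b)\,ds$, combined with the factor $(b-a)\cdot\frac{1}{a-b}=-1$ carried by the boundary terms) checks out.
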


Meanwhile, in \cite{sarikaya} Sarikaya and Erden gave the following
interesting identity and by using this indentity they establised some
interesting integral inequalities:

\begin{lemma}
Let $f:I^{\circ }\subseteq \mathbb{R}\rightarrow \mathbb{R}$ be a
differentiable mapping on $I^{\circ }$, $a,b\in I^{\circ }$ with $a<b$ and
let $w:\left[ a,b\right] \rightarrow \mathbb{R}$. If $f^{\prime },w\in
L[a,b] $, then, for all $x\in \lbrack a,b]$, the following equality holds:%
\begin{eqnarray}
&&\int\limits_{a}^{x}\left( \int\limits_{a}^{t}w(s)ds\right) ^{\alpha
}f^{^{\prime }}(t)dt-\int\limits_{x}^{b}\left(
\int\limits_{t}^{b}w(s)ds\right) ^{\alpha }f^{^{\prime }}(t)dt  \label{0} \\
&=&\left[ \left( \int\limits_{a}^{x}w(s)ds\right) ^{\alpha }+\left(
\int\limits_{x}^{b}w(s)ds\right) ^{\alpha }\right] f(x)  \notag \\
&&-\alpha \int\limits_{a}^{x}\left( \int\limits_{a}^{t}w(s)ds\right)
^{\alpha -1}w(t)f(t)dt-\alpha \int\limits_{x}^{b}\left(
\int\limits_{t}^{b}w(s)ds\right) ^{\alpha -1}w(t)f(t)dt.  \notag
\end{eqnarray}
\end{lemma}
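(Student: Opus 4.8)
The plan is to handle the two integrals on the left-hand side of (\ref{0}) separately and integrate each of them by parts, using $f$ as an antiderivative of $f'$. First I would introduce the accumulated-weight functions
\[
W(t)=\int_{a}^{t}w(s)\,ds,\qquad V(t)=\int_{t}^{b}w(s)\,ds,
\]
so that the two integrands become $W(t)^{\alpha}f'(t)$ and $V(t)^{\alpha}f'(t)$. Since $w\in L[a,b]$, both $W$ and $V$ are absolutely continuous, and the fundamental theorem of calculus gives $W'(t)=w(t)$ and $V'(t)=-w(t)$ almost everywhere. Throughout I take the standing assumption $\alpha>0$, which is needed to make the boundary terms vanish.

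For the first integral I would set $u=W(t)^{\alpha}$ and $dv=f'(t)\,dt$, so that by the chain rule $du=\alpha W(t)^{\alpha-1}w(t)\,dt$ and $v=f(t)$. Integration by parts then yields
\[
\int_{a}^{x}W(t)^{\alpha}f'(t)\,dt=\Big[W(t)^{\alpha}f(t)\Big]_{a}^{x}-\alpha\int_{a}^{x}W(t)^{\alpha-1}w(t)f(t)\,dt.
\]
The boundary contribution at $t=a$ disappears because $W(a)=\int_{a}^{a}w(s)\,ds=0$ and $\alpha>0$, leaving only $W(x)^{\alpha}f(x)=\left(\int_{a}^{x}w(s)\,ds\right)^{\alpha}f(x)$. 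Treating the second integral identically with $u=V(t)^{\alpha}$, and using $V'(t)=-w(t)$ so that $du=-\alpha V(t)^{\alpha-1}w(t)\,dt$, gives
\[
\int_{x}^{b}V(t)^{\alpha}f'(t)\,dt=\Big[V(t)^{\alpha}f(t)\Big]_{x}^{b}+\alpha\int_{x}^{b}V(t)^{\alpha-1}w(t)f(t)\,dt,
\]
where now the boundary term at $t=b$ vanishes since $V(b)=0$, leaving $-V(x)^{\alpha}f(x)$. Subtracting this second expression from the first and collecting the two coefficients of $f(x)$ reproduces exactly the right-hand side of (\ref{0}).

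The algebra is routine; the point that deserves care is the legitimacy of the integration by parts together with the vanishing of the boundary contributions. The boundary terms cancel precisely because each inner integral collapses to zero at its ``near'' endpoint, which combines with $\alpha>0$ to annihilate $0^{\alpha}$. To justify the integration by parts under the stated hypotheses, I would observe that $f$ is absolutely continuous (being differentiable with $f'\in L[a,b]$) and that $t\mapsto W(t)^{\alpha}$ and $t\mapsto V(t)^{\alpha}$ are absolutely continuous on $[a,b]$, so the product rule holds almost everywhere and the fundamental theorem of calculus applies to the products $W^{\alpha}f$ and $V^{\alpha}f$. This is the only place where the merely $L^{1}$ regularity of $w$ and $f'$ must be handled with some attention rather than treated purely formally.
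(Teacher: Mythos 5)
Your proof is correct: the paper itself states this lemma without proof (it is quoted from the Sarikaya--Erden reference), and your integration-by-parts argument with the accumulated weights $W(t)=\int_a^t w(s)\,ds$ and $V(t)=\int_t^b w(s)\,ds$, with the boundary terms killed by $W(a)=V(b)=0$ and $\alpha>0$, is exactly the standard derivation and mirrors the structure of the paper's own proof of its Lemma~\ref{2.3}. The only caveat, which the stated hypotheses themselves gloss over, is that for non-integer $\alpha$ one implicitly needs $W$ and $V$ to stay nonnegative (e.g.\ $w\geq 0$) for the powers to make sense; your absolute-continuity remarks otherwise cover the legitimacy of the integration by parts.
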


For several recent results concerning inequality (\ref{eq00001})$,$ see \cite%
{iscan}, \cite{S12a}, \cite{sarikaya}, \cite{sarikaya1}, \cite{TYH11} where
further references are listed.

We give some necessary definitions and mathematical preliminaries of
fractional calculus theory which are used throughout this paper.

\begin{definition}
Let $f\in L[a,b].$ The Riemann-Liouville integrals $J_{a+}^{\alpha }f$ and $%
J_{b-}^{\alpha }f$ of order $\alpha >0$ with $a\geq 0$ are defined by 
\begin{equation*}
J_{a+}^{\alpha }f(x)=\frac{1}{\Gamma (\alpha )}\int_{a}^{x}\left( x-t\right)
^{\alpha -1}f(t)dt,\ \ x>a
\end{equation*}%
and%
\begin{equation*}
J_{b-}^{\alpha }f(x)=\frac{1}{\Gamma (\alpha )}\int_{x}^{b}\left( t-x\right)
^{\alpha -1}f(t)dt,\ \ x<b
\end{equation*}%
respectively where $\Gamma (\alpha )=\int_{0}^{\infty }e^{-t}u^{\alpha -1}du$%
. Here is $J_{a+}^{0}f(x)=J_{b-}^{0}f(x)=f(x).$
\end{definition}

In the case of $\alpha =1,$ the fractional integral reduces to the classical
integral.

In \cite{SSYB13}, Sarikaya et. al. represented Hermite--Hadamard's
inequalities in fractional integral forms as follows.

\begin{theorem}
\label{1.3} Let $f:\left[ a,b\right] \rightarrow 
\mathbb{R}
$ be a positive function with $0\leq a<b$ and $f\in L\left[ a,b\right] $. If 
$f$ is a convex function on $\left[ a,b\right] $, then the following
inequalities for fractional integrals hold%
\begin{equation}
f\left( \frac{a+b}{2}\right) \leq \frac{\Gamma (\alpha +1)}{2\left(
b-a\right) ^{\alpha }}\left[ J_{a+}^{\alpha }f(b)+J_{b-}^{\alpha }f(a)\right]
\leq \frac{f(a)+f(b)}{2}  \label{1-3}
\end{equation}%
with $\alpha >0.$
\end{theorem}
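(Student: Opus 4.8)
The plan is to reduce both fractional integrals in \eqref{1-3} to ordinary integrals over $[0,1]$ against the weight $t^{\alpha -1}$, and then to apply the two pointwise convexity inequalities that already underlie the classical Hermite--Hadamard estimate \eqref{eq1}. First I would record those two facts: for every $t\in \left[ 0,1\right] $,
\begin{equation*}
2f\left( \frac{a+b}{2}\right) \leq f(ta+(1-t)b)+f((1-t)a+tb)\leq f(a)+f(b),
\end{equation*}
where the left inequality comes from convexity applied to the pair $ta+(1-t)b$ and $(1-t)a+tb$, whose midpoint is $\frac{a+b}{2}$, and the right inequality from convexity of each summand.

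Next I would rewrite the bracketed quantity in \eqref{1-3}. Using the definition of $J_{a+}^{\alpha }f(b)$ together with the substitution $x=ta+(1-t)b$, so that $b-x=t(b-a)$ and $dx=-(b-a)dt$, I expect to obtain
\begin{equation*}
J_{a+}^{\alpha }f(b)=\frac{(b-a)^{\alpha }}{\Gamma (\alpha )}\int_{0}^{1}t^{\alpha -1}f(ta+(1-t)b)\,dt,
\end{equation*}
and, after the analogous substitution in $J_{b-}^{\alpha }f(a)$ followed by the reflection $t\mapsto 1-t$,
\begin{equation*}
J_{b-}^{\alpha }f(a)=\frac{(b-a)^{\alpha }}{\Gamma (\alpha )}\int_{0}^{1}t^{\alpha -1}f((1-t)a+tb)\,dt.
\end{equation*}
Adding these yields the symmetric representation
\begin{equation*}
J_{a+}^{\alpha }f(b)+J_{b-}^{\alpha }f(a)=\frac{(b-a)^{\alpha }}{\Gamma (\alpha )}\int_{0}^{1}t^{\alpha -1}\left[ f(ta+(1-t)b)+f((1-t)a+tb)\right] dt.
\end{equation*}

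The conclusion then follows by integrating the two convexity bounds against $t^{\alpha -1}$ over $[0,1]$ and using $\int_{0}^{1}t^{\alpha -1}dt=1/\alpha $ together with $\Gamma (\alpha +1)=\alpha \Gamma (\alpha )$. The upper bound gives $J_{a+}^{\alpha }f(b)+J_{b-}^{\alpha }f(a)\leq \frac{(b-a)^{\alpha }}{\alpha \Gamma (\alpha )}\left( f(a)+f(b)\right) $, which after multiplication by $\frac{\Gamma (\alpha +1)}{2(b-a)^{\alpha }}$ is exactly the right inequality in \eqref{1-3}; the lower bound analogously produces the left inequality. The only step requiring genuine care is the change of variables that realises the $t^{\alpha -1}$ weight: one must orient the limits correctly and reflect $J_{b-}^{\alpha }f(a)$ via $t\mapsto 1-t$ so that both integrands carry the same weight $t^{\alpha -1}$. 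Once that symmetric representation is in place, the two inequalities are immediate, and setting $\alpha =1$ recovers \eqref{eq1}.
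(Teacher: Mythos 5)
Your proof is correct. The paper states this theorem without proof, simply quoting it from the cited reference \cite{SSYB13}; your argument --- reducing both fractional integrals to integrals over $[0,1]$ against the weight $t^{\alpha-1}$ via the substitution $x=ta+(1-t)b$ and then integrating the two pointwise convexity inequalities --- is exactly the standard derivation given in that source, so there is no divergence to report.
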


In \cite{iscan}, \.{I}\c{s}can gave the following Hermite-Hadamard-Fejer
integral inequalities via fractional integrals:

\begin{theorem}
\label{2.2} Let $f:\left[ a,b\right] \mathbb{\rightarrow R}$ be convex
function with $a<b$ and $f\in L\left[ a,b\right] $. If $g:\left[ a,b\right] 
\mathbb{\rightarrow R}$ is nonnegative,integrable and symmetric to $(a+b)/2$%
, then the following inequalities for fractional integrals hold%
\begin{eqnarray}
f\left( \frac{a+b}{2}\right) \left[ J_{a+}^{\alpha }g(b)+J_{b-}^{\alpha }g(a)%
\right] &\leq &\left[ J_{a+}^{\alpha }\left( fg\right) (b)+J_{b-}^{\alpha
}\left( fg\right) (a)\right]  \label{2-2} \\
&\leq &\frac{f(a)+f(b)}{2}\left[ J_{a+}^{\alpha }g(b)+J_{b-}^{\alpha }g(a)%
\right]  \notag
\end{eqnarray}%
with $\alpha >0.$
\end{theorem}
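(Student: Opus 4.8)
The plan is to reduce both fractional inequalities to the classical pointwise convexity estimates along the segment joining $a$ and $b$, after rewriting every fractional integral as an ordinary integral over $[0,1]$ against the weight $t^{\alpha-1}$. The engine of the argument is the symmetry hypothesis on $g$: since $g(a+b-x)=g(x)$ and the two points $ta+(1-t)b$ and $(1-t)a+tb$ are reflections of one another about $(a+b)/2$ (their sum is $a+b$), one has $g(ta+(1-t)b)=g((1-t)a+tb)$ for every $t\in[0,1]$.

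First I would record the change-of-variables identities. Using $x=ta+(1-t)b$ (respectively $x=(1-t)a+tb$) so that $dt=\mp\,dx/(b-a)$, a direct substitution gives, for any integrable $h$,
\[
\int_0^1 t^{\alpha-1}h(ta+(1-t)b)\,dt=\frac{\Gamma(\alpha)}{(b-a)^\alpha}J_{a+}^\alpha h(b),\qquad \int_0^1 t^{\alpha-1}h((1-t)a+tb)\,dt=\frac{\Gamma(\alpha)}{(b-a)^\alpha}J_{b-}^\alpha h(a).
\]
Applying this with $h=fg$ and summing expresses $J_{a+}^\alpha(fg)(b)+J_{b-}^\alpha(fg)(a)$ as $\frac{(b-a)^\alpha}{\Gamma(\alpha)}$ times the single integral $\int_0^1 t^{\alpha-1}G(t)\left[f(ta+(1-t)b)+f((1-t)a+tb)\right]\,dt$, where $G(t):=g(ta+(1-t)b)=g((1-t)a+tb)$ by symmetry. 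Applying the same identities with $h=g$ and invoking the symmetry once more shows the two pieces $J_{a+}^\alpha g(b)$ and $J_{b-}^\alpha g(a)$ are equal, so that $\int_0^1 t^{\alpha-1}G(t)\,dt=\frac{\Gamma(\alpha)}{2(b-a)^\alpha}\left[J_{a+}^\alpha g(b)+J_{b-}^\alpha g(a)\right]$.

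With these identities in hand, both inequalities follow by bounding the bracket $f(ta+(1-t)b)+f((1-t)a+tb)$ pointwise and integrating against the nonnegative kernel $t^{\alpha-1}G(t)$. For the left inequality, midpoint convexity gives $f(ta+(1-t)b)+f((1-t)a+tb)\geq 2f\left(\frac{a+b}{2}\right)$; for the right inequality, the two estimates $f(ta+(1-t)b)\leq tf(a)+(1-t)f(b)$ and $f((1-t)a+tb)\leq(1-t)f(a)+tf(b)$ add to $f(a)+f(b)$. Multiplying through by $t^{\alpha-1}G(t)\geq 0$, integrating over $[0,1]$, and inserting the identities from the previous step yields exactly (\ref{2-2}) after cancelling the common factor $\Gamma(\alpha)/(b-a)^\alpha$.

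The only delicate point is justifying that convexity may be applied to the sum of the two \emph{symmetric} arguments rather than to a single one, and this is precisely where the symmetry of $g$ is indispensable: it lets one factor the common weight $G(t)$ out of both terms, so the convexity bound for the pair can be used directly, and it collapses $J_{a+}^\alpha g(b)$ and $J_{b-}^\alpha g(a)$ into a single quantity. The change-of-variables bookkeeping and the appearance of $\Gamma(\alpha)$ are routine, and no integrability issue arises since $t^{\alpha-1}$ is integrable on $[0,1]$ for $\alpha>0$ and $fg\in L[a,b]$.
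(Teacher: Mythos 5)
Your proof is correct. Note that the paper states Theorem \ref{2.2} without proof, quoting it as a known result from \cite{iscan}; your argument --- rewriting both fractional integrals as integrals over $[0,1]$ against the weight $t^{\alpha-1}$, using the symmetry of $g$ to give the reflected points $ta+(1-t)b$ and $(1-t)a+tb$ a common factor $G(t)\geq 0$, and then applying convexity to that symmetric pair for both the midpoint lower bound and the endpoint upper bound --- is precisely the standard proof given in that reference, so there is nothing to correct or add.
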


Because of the wide application of Hermite-Hadamard type inequalities and
fractional integrals, many researchers extend their studies to
Hermite-Hadamard type inequalities involving fractional integrals that are
not limited to integer integrals. Recently, more and more Hermite-Hadamard
inequalities involving fractional integrals have been obtained for different
classes of functions; see (\cite{Belarbi}-\cite{Dahmani4}),(\cite{iscan}-%
\cite{I14}),(\cite{SO12}-\cite{S12}).

The aim of this paper is to present some new Hermite--Hadamard-Fej\'{e}r
type results for differentiable mappings whose derivatives in absolute value
are convex. The results presented here would provide extensions of those
given in earlier works.

\section{Main results}

We establish some new results connected with the left-hand side of (\ref%
{eq00001}) used the following Lemma. Now, we give the following new Lemma
for our results.

\begin{lemma}
\label{2.3}Let $f:\left[ a,b\right] \mathbb{\rightarrow R}$ be a
differentiable mapping on $\left( a,b\right) $ with $a<b$ and let $g:\left[
a,b\right] \mathbb{\rightarrow R}$. If $f^{\prime },g\in L\left[ a,b\right] $%
, then the following identity for fractional integrals holds:%
\begin{eqnarray}
&&f\left( \frac{a+b}{2}\right) \left[ J_{\left( \frac{a+b}{2}\right)
-}^{\alpha }g(a)+J_{\left( \frac{a+b}{2}\right) +}^{\alpha }g(b)\right] 
\notag \\
&&-\left[ J_{\left( \frac{a+b}{2}\right) -}^{\alpha }\left( fg\right)
(a)+J_{\left( \frac{a+b}{2}\right) +}^{\alpha }\left( fg\right) (b)\right] 
\notag \\
&=&\frac{1}{\Gamma (\alpha )}\int_{a}^{b}k(t)f^{\prime }(t)dt,  \label{2-3}
\end{eqnarray}%
where%
\begin{equation*}
k(t)=\left\{ 
\begin{array}{cc}
\int_{a}^{t}\left( s-a\right) ^{\alpha -1}g(s)ds & t\in \left[ a,\frac{a+b}{2%
}\right] \\ 
\int_{b}^{t}\left( b-s\right) ^{\alpha -1}g(s)ds & t\in \left[ \frac{a+b}{2}%
,b\right]%
\end{array}%
\right. .
\end{equation*}
\end{lemma}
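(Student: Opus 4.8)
The plan is to establish \eqref{2-3} by integration by parts, working from the right-hand side back to the left, so that the factor $f\left(\frac{a+b}{2}\right)$ emerges from the boundary terms. Write $c=\frac{a+b}{2}$. Unwinding the definitions of the Riemann--Liouville integrals,
\[
J_{c-}^{\alpha}g(a)=\frac{1}{\Gamma(\alpha)}\int_{a}^{c}(t-a)^{\alpha-1}g(t)\,dt, \qquad J_{c+}^{\alpha}g(b)=\frac{1}{\Gamma(\alpha)}\int_{c}^{b}(b-t)^{\alpha-1}g(t)\,dt,
\]
with the same two formulas for $fg$ in place of $g$. Subtracting, the left-hand side of \eqref{2-3} equals $\frac{1}{\Gamma(\alpha)}$ times
\[
\int_{a}^{c}(t-a)^{\alpha-1}g(t)\left[f(c)-f(t)\right]dt+\int_{c}^{b}(b-t)^{\alpha-1}g(t)\left[f(c)-f(t)\right]dt,
\]
so it suffices to show that $\int_a^b k(t)f'(t)\,dt$ equals this last double integral.

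Next I would split $\int_a^b k(t)f'(t)\,dt$ at $t=c$ along the two branches of $k$ and integrate each piece by parts with $u=k(t)$ and $dv=f'(t)\,dt$. By the fundamental theorem of calculus, $k'(t)=(t-a)^{\alpha-1}g(t)$ on $[a,c]$ and $k'(t)=(b-t)^{\alpha-1}g(t)$ on $[c,b]$, so the two $-\int k'(t)f(t)\,dt$ terms reproduce exactly the $-f(t)$ contributions of the target double integral. For the boundary terms, the outer endpoints give $k(a)=0$ and $k(b)=0$ immediately from the definition of $k$, so they drop out. At the common endpoint $t=c$ the left piece contributes $+f(c)\int_a^c(s-a)^{\alpha-1}g(s)\,ds$, while the right piece contributes $+f(c)\int_c^b(b-s)^{\alpha-1}g(s)\,ds$; here the sign works out because $c$ is the lower limit of the right branch, where $k(c)=\int_b^c(b-s)^{\alpha-1}g(s)\,ds=-\int_c^b(b-s)^{\alpha-1}g(s)\,ds$. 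Adding these, the boundary terms assemble into precisely the $+f(c)$ contribution of the target, completing the identification.

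The only step that requires genuine care is the legitimacy of the integration by parts when $0<\alpha<1$, where the weights $(t-a)^{\alpha-1}$ and $(b-t)^{\alpha-1}$ are singular at $a$ and $b$. Since the fractional integrals on the left-hand side of \eqref{2-3} are assumed to exist, the products $(t-a)^{\alpha-1}g(t)$ and $(b-t)^{\alpha-1}g(t)$ are integrable on the respective subintervals; hence $k$ is finite and continuous on $[a,b]$ with $k(a)=k(b)=0$, and the integration-by-parts formula holds in the Lebesgue sense with $k'$ as computed above, so the boundary terms at $a$ and $b$ vanish. Dividing the resulting identity by $\Gamma(\alpha)$ then gives \eqref{2-3}. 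I expect this integrability bookkeeping, rather than any algebraic manipulation, to be the main obstacle to a fully rigorous argument.
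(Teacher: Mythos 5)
Your proposal is correct and follows essentially the same route as the paper's own proof: split $\int_a^b k(t)f'(t)\,dt$ at the midpoint, integrate each piece by parts with $u=k(t)$ and $dv=f'(t)\,dt$, and observe that the boundary terms at $a$ and $b$ vanish while those at $\frac{a+b}{2}$ assemble into $f\left(\frac{a+b}{2}\right)\left[J_{(\frac{a+b}{2})-}^{\alpha}g(a)+J_{(\frac{a+b}{2})+}^{\alpha}g(b)\right]$. Your closing remark on the integrability of the singular weights for $0<\alpha<1$ is a point the paper passes over silently, but it does not change the argument.
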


\begin{proof}
It suffices to note that%
\begin{eqnarray*}
I &=&\int_{a}^{b}k(t)f^{\prime }(t)dt \\
&=&\int_{a}^{\frac{a+b}{2}}\left( \int_{a}^{t}\left( s-a\right) ^{\alpha
-1}g(s)ds\right) f^{\prime }(t)dt+\int_{\frac{a+b}{2}}^{b}\left(
\int_{b}^{t}\left( b-s\right) ^{\alpha -1}g(s)ds\right) f^{\prime }(t)dt \\
&=&I_{1}+I_{2}.
\end{eqnarray*}%
By integration by parts,\ we get%
\begin{eqnarray*}
I_{1} &=&\left. \left( \int_{a}^{t}\left( s-a\right) ^{\alpha
-1}g(s)ds\right) f(t)\right\vert _{a}^{\frac{a+b}{2}}-\int_{a}^{\frac{a+b}{2}%
}\left( t-a\right) ^{\alpha -1}g(t)f(t)dt \\
&=&\left( \int_{a}^{\frac{a+b}{2}}\left( s-a\right) ^{\alpha
-1}g(s)ds\right) f\left( \frac{a+b}{2}\right) -\int_{a}^{\frac{a+b}{2}%
}\left( t-a\right) ^{\alpha -1}(fg)(t)dt \\
&=&\Gamma (\alpha )\left[ f\left( \frac{a+b}{2}\right) J_{\left( \frac{a+b}{2%
}\right) -}^{\alpha }g(a)-J_{\left( \frac{a+b}{2}\right) -}^{\alpha }(fg)(a)%
\right] ,
\end{eqnarray*}%
and similarly%
\begin{eqnarray*}
I_{2} &=&\left. \left( \int_{b}^{t}\left( b-s\right) ^{\alpha
-1}g(s)ds\right) f(t)\right\vert _{\frac{a+b}{2}}^{b}-\int_{\frac{a+b}{2}%
}^{b}\left( b-t\right) ^{\alpha -1}g(t)f(t)dt \\
&=&\left( \int_{\frac{a+b}{2}}^{b}\left( b-s\right) ^{\alpha
-1}g(s)ds\right) f\left( \frac{a+b}{2}\right) -\int_{\frac{a+b}{2}%
}^{b}\left( b-t\right) ^{\alpha -1}(fg)(t)dt \\
&=&\Gamma (\alpha )\left[ f\left( \frac{a+b}{2}\right) J_{\left( \frac{a+b}{2%
}\right) +}^{\alpha }g(b)-J_{\left( \frac{a+b}{2}\right) +}^{\alpha }\left(
fg\right) (b)\right] .
\end{eqnarray*}%
Thus, we can write%
\begin{eqnarray*}
I &=&I_{1}+I_{2} \\
&=&\Gamma (\alpha )\left\{ f\left( \frac{a+b}{2}\right) \left[ J_{\left( 
\frac{a+b}{2}\right) -}^{\alpha }g(a)+J_{\left( \frac{a+b}{2}\right)
+}^{\alpha }g(b)\right] -\left[ J_{\left( \frac{a+b}{2}\right) -}^{\alpha
}(fg)(a)+J_{\left( \frac{a+b}{2}\right) +}^{\alpha }\left( fg\right) (b)%
\right] \right\} .
\end{eqnarray*}%
Multiplying the both sides by $\left( \Gamma (\alpha )\right) ^{-1},$ we
obtain (\ref{2-3}) which completes the proof.
\end{proof}

\begin{remark}
\label{r1} If we choose $\alpha =1$ in Lemma \ref{2.3}, then the inequality (%
\ref{2-3}) reduces to (\ref{eq00002}).
\end{remark}

Now, we are ready to state and prove our results.

\begin{theorem}
\label{2.4}Let $f:I\mathbb{\rightarrow R}$ be a differentiable mapping on $%
I^{\circ }$ and $f^{\prime }\in L\left[ a,b\right] $ with $a<b$ and $g:\left[
a,b\right] \mathbb{\rightarrow R}$ is continuous. If $\left\vert f^{\prime
}\right\vert $ is convex on $\left[ a,b\right] $, then the following
inequality for fractional integrals holds:%
\begin{eqnarray}
&&\left\vert f\left( \frac{a+b}{2}\right) \left[ J_{\left( \frac{a+b}{2}%
\right) -}^{\alpha }g(a)+J_{\left( \frac{a+b}{2}\right) +}^{\alpha }g(b)%
\right] \right.  \notag \\
&&\left. -\left[ J_{\left( \frac{a+b}{2}\right) -}^{\alpha }\left( fg\right)
(a)+J_{\left( \frac{a+b}{2}\right) +}^{\alpha }\left( fg\right) (b)\right]
\right\vert  \notag \\
&\leq &\frac{\left( b-a\right) ^{\alpha +1}\left\Vert g\right\Vert _{\left[
a,b\right] ,\infty }}{2^{\alpha +1}(\alpha +1)\Gamma (\alpha +1)}\left(
\left\vert f^{\prime }\left( a\right) \right\vert +\left\vert f^{\prime
}\left( b\right) \right\vert \right)  \label{eq2.1}
\end{eqnarray}%
with $\alpha >0.$
\end{theorem}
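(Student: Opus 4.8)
The plan is to invoke the identity of Lemma \ref{2.3} and then estimate the resulting single integral. First I would write the left-hand side of \eqref{eq2.1} as $\left|\frac{1}{\Gamma(\alpha)}\int_a^b k(t)f'(t)\,dt\right|$ using \eqref{2-3}, and apply the triangle inequality to obtain the bound $\frac{1}{\Gamma(\alpha)}\int_a^b |k(t)|\,|f'(t)|\,dt$. Splitting at the midpoint $\frac{a+b}{2}$ according to the definition of $k$, the task reduces to controlling $\int_a^{(a+b)/2}|k(t)||f'(t)|\,dt$ and $\int_{(a+b)/2}^b |k(t)||f'(t)|\,dt$ separately.

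Next I would bound the kernel $k$ pointwise by pulling out $\left\Vert g\right\Vert _{\left[ a,b\right],\infty}$. On $\left[a,\frac{a+b}{2}\right]$ this gives $|k(t)|\le \left\Vert g\right\Vert _{\left[ a,b\right],\infty}\int_a^t(s-a)^{\alpha-1}\,ds = \left\Vert g\right\Vert _{\left[ a,b\right],\infty}\frac{(t-a)^\alpha}{\alpha}$; on $\left[\frac{a+b}{2},b\right]$, being careful that $\int_b^t = -\int_t^b$, the same estimate yields $|k(t)|\le \left\Vert g\right\Vert _{\left[ a,b\right],\infty}\frac{(b-t)^\alpha}{\alpha}$. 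One small point to watch here is this orientation of the inner integral, so that the absolute value emerges with the correct power $(b-t)^\alpha$.

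Then, to invoke the convexity of $|f'|$, I would change variables so that each piece runs over $\left[0,\tfrac12\right]$: in the first integral set $t=(1-u)a+ub$ and in the second set $t=va+(1-v)b$, each substitution contributing a factor $(b-a)^{\alpha+1}$ together with $u^\alpha$, respectively $v^\alpha$. Convexity then gives $|f'((1-u)a+ub)|\le (1-u)|f'(a)|+u|f'(b)|$ and $|f'(va+(1-v)b)|\le v|f'(a)|+(1-v)|f'(b)|$. Substituting and adding the two pieces, I expect the coefficients of $|f'(a)|$ and of $|f'(b)|$ to combine into the same elementary integral: for $|f'(a)|$ one gets $\int_0^{1/2}u^\alpha(1-u)\,du + \int_0^{1/2}v^{\alpha+1}\,dv = \int_0^{1/2}u^\alpha\,du = \frac{1}{2^{\alpha+1}(\alpha+1)}$, and symmetrically for $|f'(b)|$. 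This cancellation, in which the cross terms $\mp u^{\alpha+1}$ disappear, is the one genuinely pleasant step and is what produces the clean constant.

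Finally, collecting the prefactor $\frac{1}{\Gamma(\alpha)}\cdot\frac{1}{\alpha}\left\Vert g\right\Vert _{\left[ a,b\right],\infty}=\frac{\left\Vert g\right\Vert _{\left[ a,b\right],\infty}}{\Gamma(\alpha+1)}$ with the factor $(b-a)^{\alpha+1}\cdot\frac{1}{2^{\alpha+1}(\alpha+1)}$ coming from the integrals yields exactly the right-hand side of \eqref{eq2.1}, using $\alpha\Gamma(\alpha)=\Gamma(\alpha+1)$. I do not anticipate any real analytic obstacle; the only items requiring care are the sign and orientation in the kernel estimate on the second subinterval, and keeping the two substitutions straight so that the coefficient bookkeeping lines up to give the symmetric constant.
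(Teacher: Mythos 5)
Your proposal is correct and follows essentially the same route as the paper's proof: Lemma \ref{2.3}, the triangle inequality, the pointwise kernel bounds $\vert k(t)\vert\le\Vert g\Vert_{\infty}\,(t-a)^{\alpha}/\alpha$ and $\Vert g\Vert_{\infty}\,(b-t)^{\alpha}/\alpha$ on the two half-intervals, and the convexity of $\vert f^{\prime}\vert$. The only divergence is in the final bookkeeping: the paper retains the half-interval norms $\Vert g\Vert_{[a,\frac{a+b}{2}],\infty}$ and $\Vert g\Vert_{[\frac{a+b}{2},b],\infty}$ and evaluates $\int_{a}^{(a+b)/2}(t-a)^{\alpha}(b-t)\,dt$ explicitly to get an intermediate bound with coefficients $\alpha+3$ and $\alpha+1$ before relaxing to $\Vert g\Vert_{[a,b],\infty}$, whereas your substitution and cancellation of the $u^{\alpha+1}$ terms reach the symmetric constant directly.
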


\begin{proof}
Since $\left\vert f^{\prime }\right\vert $ is convex on $\left[ a,b\right] $%
, we know that for $t\in \left[ a,b\right] $%
\begin{equation*}
\left\vert f^{\prime }(t)\right\vert =\left\vert f^{\prime }\left( \frac{b-t%
}{b-a}a+\frac{t-a}{b-a}b\right) \right\vert \leq \frac{b-t}{b-a}\left\vert
f^{\prime }\left( a\right) \right\vert +\frac{t-a}{b-a}\left\vert f^{\prime
}\left( b\right) \right\vert .
\end{equation*}%
From Lemma \ref{2.3} we have%
\begin{eqnarray*}
&&\left\vert f\left( \frac{a+b}{2}\right) \left[ J_{\left( \frac{a+b}{2}%
\right) -}^{\alpha }g(a)+J_{\left( \frac{a+b}{2}\right) +}^{\alpha }g(b)%
\right] -\left[ J_{\left( \frac{a+b}{2}\right) -}^{\alpha }\left( fg\right)
(a)+J_{\left( \frac{a+b}{2}\right) +}^{\alpha }\left( fg\right) (b)\right]
\right\vert \\
&\leq &\frac{1}{\Gamma (\alpha )}\left\{ \int_{a}^{\frac{a+b}{2}}\left\vert
\int_{a}^{t}\left( s-a\right) ^{\alpha -1}g(s)ds\right\vert \left\vert
f^{\prime }(t)\right\vert dt+\int_{\frac{a+b}{2}}^{b}\left\vert
\int_{b}^{t}\left( b-s\right) ^{\alpha -1}g(s)ds\right\vert \left\vert
f^{\prime }(t)\right\vert dt\right\} \\
&\leq &\frac{\left\Vert g\right\Vert _{\left[ a,\frac{a+b}{2}\right] ,\infty
}}{\left( b-a\right) \Gamma (\alpha )}\int_{a}^{\frac{a+b}{2}}\left(
\int_{a}^{t}\left( s-a\right) ^{\alpha -1}ds\right) \left( \left( b-t\right)
\left\vert f^{\prime }\left( a\right) \right\vert +\left( t-a\right)
\left\vert f^{\prime }\left( b\right) \right\vert \right) dt \\
&&+\frac{\left\Vert g\right\Vert _{\left[ \frac{a+b}{2},b\right] ,\infty }}{%
\left( b-a\right) \Gamma (\alpha )}\int_{\frac{a+b}{2}}^{b}\left(
\int_{t}^{b}\left( b-s\right) ^{\alpha -1}ds\right) \left( \left( b-t\right)
\left\vert f^{\prime }\left( a\right) \right\vert +\left( t-a\right)
\left\vert f^{\prime }\left( b\right) \right\vert \right) dt \\
&=&\frac{\left\Vert g\right\Vert _{\left[ a,\frac{a+b}{2}\right] ,\infty }}{%
\left( b-a\right) \Gamma (\alpha +1)}\int_{a}^{\frac{a+b}{2}}\left(
t-a\right) ^{\alpha }\left( \left( b-t\right) \left\vert f^{\prime }\left(
a\right) \right\vert +\left( t-a\right) \left\vert f^{\prime }\left(
b\right) \right\vert \right) dt \\
&&+\frac{\left\Vert g\right\Vert _{\left[ \frac{a+b}{2},b\right] ,\infty }}{%
\left( b-a\right) \Gamma (\alpha +1)}\int_{\frac{a+b}{2}}^{b}\left(
b-t\right) ^{\alpha }\left( \left( b-t\right) \left\vert f^{\prime }\left(
a\right) \right\vert +\left( t-a\right) \left\vert f^{\prime }\left(
b\right) \right\vert \right) dt \\
&=&\frac{\left( b-a\right) ^{\alpha +1}}{2^{\alpha +2}(\alpha +2)(\alpha
+1)\Gamma (\alpha +1)}\left\{ \left\Vert g\right\Vert _{\left[ a,\frac{a+b}{2%
}\right] ,\infty }\left( (\alpha +3)\left\vert f^{\prime }\left( a\right)
\right\vert +(\alpha +1)\left\vert f^{\prime }\left( b\right) \right\vert
\right) \right. \\
&&+\left. \left\Vert g\right\Vert _{\left[ \frac{a+b}{2},b\right] ,\infty
}\left( (\alpha +1)\left\vert f^{\prime }\left( a\right) \right\vert
+(\alpha +3)\left\vert f^{\prime }\left( b\right) \right\vert \right)
\right\} \\
&\leq &\frac{\left( b-a\right) ^{\alpha +1}\left\Vert g\right\Vert _{\left[
a,b\right] ,\infty }}{2^{\alpha +1}(\alpha +1)\Gamma (\alpha +1)}\left(
\left\vert f^{\prime }\left( a\right) \right\vert +\left\vert f^{\prime
}\left( b\right) \right\vert \right)
\end{eqnarray*}%
where%
\begin{equation*}
\int_{a}^{\frac{a+b}{2}}\left( t-a\right) ^{\alpha +1}dt=\int_{\frac{a+b}{2}%
}^{b}\left( b-t\right) ^{\alpha +1}dt=\frac{\left( b-a\right) ^{\alpha +2}}{%
2^{\alpha +2}\left( \alpha +2\right) },
\end{equation*}%
\begin{eqnarray*}
\int_{a}^{\frac{a+b}{2}}\left( t-a\right) ^{\alpha }\left( b-t\right) dt
&=&\int_{\frac{a+b}{2}}^{b}\left( b-t\right) ^{\alpha }\left( t-a\right) dt
\\
&=&\frac{(\alpha +3)\left( b-a\right) ^{\alpha +2}}{2^{\alpha +2}\left(
\alpha +1\right) \left( \alpha +2\right) }
\end{eqnarray*}%
This completes the proof.
\end{proof}

\begin{remark}
\label{r2} If we choose $g(x)=1$ and $\alpha =1$ in Theorem \ref{2.4}, then
the inequality (\ref{eq2.1}) reduces to (\ref{eq3}).
\end{remark}

\begin{theorem}
Let $f:I\mathbb{\rightarrow R}$ be a differentiable mapping on $I^{\circ }$
and $f^{\prime }\in L\left[ a,b\right] $ with $a<b$ and let $g:\left[ a,b%
\right] \mathbb{\rightarrow R}$ is continuous. If $\left\vert f^{\prime
}\right\vert ^{q}$ is convex on $\left[ a,b\right] ,$ $q>1,$ then the
following inequality for fractional integrals holds:%
\begin{equation}
\left\vert f\left( \frac{a+b}{2}\right) \left[ J_{a+}^{\alpha
}g(b)+J_{b-}^{\alpha }g(a)\right] -\left[ J_{a+}^{\alpha }\left( fg\right)
(b)+J_{b-}^{\alpha }\left( fg\right) (a)\right] \right\vert  \label{2-5}
\end{equation}%
\begin{eqnarray*}
&\leq &\frac{\left( b-a\right) ^{\alpha +1}}{2^{\alpha +1+\frac{1}{q}}\left(
\alpha +1\right) \left( \alpha +2\right) ^{1/q}\Gamma (\alpha +1)} \\
&&\times \left\{ \left\Vert g\right\Vert _{\left[ a,\frac{a+b}{2}\right]
,\infty }\left( (\alpha +3)\left\vert f^{\prime }\left( a\right) \right\vert
^{q}+\left( \alpha +1\right) \left\vert f^{\prime }\left( b\right)
\right\vert ^{q}\right) ^{1/q}\right. \\
&&+\left. \left\Vert g\right\Vert _{\left[ \frac{a+b}{2},b\right] ,\infty
}\left( \left( \alpha +1\right) \left\vert f^{\prime }\left( a\right)
\right\vert ^{q}+(\alpha +3)\left\vert f^{\prime }\left( b\right)
\right\vert ^{q}dt\right) ^{1/q}\right\} \\
&\leq &\frac{\left( b-a\right) ^{\alpha +1}\left\Vert g\right\Vert _{\left[
a,b\right] ,\infty }}{2^{\alpha +1+\frac{1}{q}}\left( \alpha +1\right)
\left( \alpha +2\right) ^{1/q}\Gamma (\alpha +1)} \\
&&\times \left\{ \left( \left\vert f^{\prime }\left( a\right) \right\vert
^{q}+\left( \alpha +1\right) \left\vert f^{\prime }\left( b\right)
\right\vert ^{q}dt\right) ^{1/q}\right. \\
&&+\left. \left( \left( \alpha +1\right) \left\vert f^{\prime }\left(
a\right) \right\vert ^{q}+\left\vert f^{\prime }\left( b\right) \right\vert
^{q}dt\right) ^{1/q}\right\}
\end{eqnarray*}%
with $\alpha >0.$
\end{theorem}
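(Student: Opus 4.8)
The plan is to run the same machinery as in the proof of Theorem~\ref{2.4}, but to replace the crude pointwise estimate of $|f'|$ by an application of H\"older's inequality in weighted (power-mean) form; this is precisely what lets the exponent $q>1$ and the hypothesis on $|f'|^{q}$ enter. First I would invoke the identity of Lemma~\ref{2.3}, take absolute values, and split the right-hand side into the two pieces coming from $[a,\frac{a+b}{2}]$ and $[\frac{a+b}{2},b]$. On each piece I bound the inner integral of $g$ by its supremum norm on the relevant subinterval, using
\[
\int_{a}^{t}(s-a)^{\alpha-1}\,ds=\frac{(t-a)^{\alpha}}{\alpha},\qquad
\int_{t}^{b}(b-s)^{\alpha-1}\,ds=\frac{(b-t)^{\alpha}}{\alpha},
\]
so that the factor $\tfrac{1}{\alpha}$ combines with the $\tfrac{1}{\Gamma(\alpha)}$ of Lemma~\ref{2.3} to give the $\Gamma(\alpha+1)$ in the denominator of the claimed bound. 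This reduces matters to estimating $\int_{a}^{(a+b)/2}(t-a)^{\alpha}|f'(t)|\,dt$ and its mirror image.

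The key step is to treat $(t-a)^{\alpha}$ as a weight and apply the power-mean inequality
\[
\int_{a}^{\frac{a+b}{2}}(t-a)^{\alpha}|f'(t)|\,dt
\le\left(\int_{a}^{\frac{a+b}{2}}(t-a)^{\alpha}\,dt\right)^{1-\frac1q}
\left(\int_{a}^{\frac{a+b}{2}}(t-a)^{\alpha}|f'(t)|^{q}\,dt\right)^{\frac1q}.
\]
Into the second factor I substitute the convexity estimate $|f'(t)|^{q}\le\frac{b-t}{b-a}|f'(a)|^{q}+\frac{t-a}{b-a}|f'(b)|^{q}$, after which every remaining integral is elementary and has already been evaluated in the proof of Theorem~\ref{2.4}; in particular
\[
\int_{a}^{\frac{a+b}{2}}(t-a)^{\alpha}\,dt=\frac{(b-a)^{\alpha+1}}{2^{\alpha+1}(\alpha+1)},
\]
together with the integrals $\int(t-a)^{\alpha+1}\,dt$ and $\int(t-a)^{\alpha}(b-t)\,dt$ recorded there, which produce the coefficients $(\alpha+3)$ and $(\alpha+1)$. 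The symmetric piece over $[\frac{a+b}{2},b]$ is handled identically with $(b-t)^{\alpha}$ as the weight.

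The one genuinely fiddly part is the bookkeeping of exponents when the two H\"older factors are multiplied: the powers of $(b-a)$ must combine as $(\alpha+1)(1-\tfrac1q)+(\alpha+1)\tfrac1q=\alpha+1$, the powers of $2$ in the denominator as $(\alpha+1)(1-\tfrac1q)+(\alpha+2)\tfrac1q=\alpha+1+\tfrac1q$, the powers of $(\alpha+1)$ as $(1-\tfrac1q)+\tfrac1q=1$, and only a single factor $(\alpha+2)^{1/q}$ survives; verifying that these collapse to exactly the constant $2^{\alpha+1+\frac1q}(\alpha+1)(\alpha+2)^{1/q}\Gamma(\alpha+1)$ is where any slip would hide. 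Assembling the two pieces yields the first displayed bound, with the localized norms $\|g\|_{[a,(a+b)/2],\infty}$ and $\|g\|_{[(a+b)/2,b],\infty}$; the final inequality then follows at once by bounding both localized norms by $\|g\|_{[a,b],\infty}$.
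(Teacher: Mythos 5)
Your proposal is correct and takes essentially the same route as the paper: the identity of Lemma~\ref{2.3}, the power--mean (weighted H\"{o}lder) inequality with the fractional kernel as weight, the convexity of $\left\vert f^{\prime }\right\vert ^{q}$, and the same elementary integrals; the only cosmetic difference is that you bound $g$ by its sup--norm \emph{before} applying the power--mean inequality rather than after, which changes nothing. Your exponent bookkeeping correctly reproduces the constant $2^{\alpha +1+\frac{1}{q}}\left( \alpha +1\right) \left( \alpha +2\right) ^{1/q}\Gamma (\alpha +1)$ and the coefficients $(\alpha +3)$, $(\alpha +1)$.
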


\begin{proof}
Since $\left\vert f^{\prime }\right\vert ^{q}$ is convex on $\left[ a,b%
\right] $, we know that for $t\in \left[ a,b\right] $ 
\begin{equation*}
\left\vert f^{\prime }(t)\right\vert ^{q}=\left\vert f^{\prime }\left( \frac{%
b-t}{b-a}a+\frac{t-a}{b-a}b\right) \right\vert ^{q}\leq \frac{b-t}{b-a}%
\left\vert f^{\prime }\left( a\right) \right\vert ^{q}+\frac{t-a}{b-a}%
\left\vert f^{\prime }\left( b\right) \right\vert ^{q}.
\end{equation*}

Using Lemma \ref{2.3}, Power mean inequality and the convexity of $%
\left\vert f^{\prime }\right\vert ^{q}$, it follows that 
\begin{eqnarray*}
&&\left\vert f\left( \frac{a+b}{2}\right) \left[ J_{\left( \frac{a+b}{2}%
\right) -}^{\alpha }g(a)+J_{\left( \frac{a+b}{2}\right) +}^{\alpha }g(b)%
\right] -\left[ J_{\left( \frac{a+b}{2}\right) -}^{\alpha }\left( fg\right)
(a)+J_{\left( \frac{a+b}{2}\right) +}^{\alpha }\left( fg\right) (b)\right]
\right\vert \\
&\leq &\frac{1}{\Gamma (\alpha )}\left( \int_{a}^{\frac{a+b}{2}}\left\vert
\int_{a}^{t}\left( s-a\right) ^{\alpha -1}g(s)ds\right\vert dt\right)
^{1-1/q}\left( \int_{a}^{\frac{a+b}{2}}\left\vert \int_{a}^{t}\left(
s-a\right) ^{\alpha -1}g(s)ds\right\vert \left\vert f^{\prime }\left(
t\right) \right\vert ^{q}dt\right) ^{1/q} \\
&&+\frac{1}{\Gamma (\alpha )}\left( \int_{\frac{a+b}{2}}^{b}\left\vert
\int_{b}^{t}\left( b-s\right) ^{\alpha -1}g(s)ds\right\vert dt\right)
^{1-1/q}\left( \int_{\frac{a+b}{2}}^{b}\left\vert \int_{b}^{t}\left(
b-s\right) ^{\alpha -1}g(s)ds\right\vert \left\vert f^{\prime }\left(
t\right) \right\vert ^{q}dt\right) ^{1/q} \\
&\leq &\frac{\left\Vert g\right\Vert _{\left[ a,\frac{a+b}{2}\right] ,\infty
}}{\Gamma (\alpha )}\left( \int_{a}^{\frac{a+b}{2}}\left\vert
\int_{a}^{t}\left( s-a\right) ^{\alpha -1}ds\right\vert dt\right)
^{1-1/q}\left( \int_{a}^{\frac{a+b}{2}}\left\vert \int_{a}^{t}\left(
s-a\right) ^{\alpha -1}ds\right\vert \left\vert f^{\prime }\left( t\right)
\right\vert ^{q}dt\right) ^{1/q} \\
&&+\frac{\left\Vert g\right\Vert _{\left[ \frac{a+b}{2},b\right] ,\infty }}{%
\Gamma (\alpha )}\left( \int_{\frac{a+b}{2}}^{b}\left\vert
\int_{b}^{t}\left( b-s\right) ^{\alpha -1}ds\right\vert dt\right)
^{1-1/q}\left( \int_{\frac{a+b}{2}}^{b}\left\vert \int_{b}^{t}\left(
b-s\right) ^{\alpha -1}ds\right\vert \left\vert f^{\prime }\left( t\right)
\right\vert ^{q}dt\right) ^{1/q}
\end{eqnarray*}%
\begin{eqnarray*}
&\leq &\frac{1}{\alpha \Gamma (\alpha )}\left( \frac{\left( b-a\right)
^{\alpha +1}}{2^{\alpha +1}\left( \alpha +1\right) }\right) ^{1-1/q} \\
&&\times \left\{ \frac{\left\Vert g\right\Vert _{\left[ a,\frac{a+b}{2}%
\right] ,\infty }}{b-a}\left( \int_{a}^{\frac{a+b}{2}}\left( t-a\right)
^{\alpha }\left( b-t\right) \left\vert f^{\prime }\left( a\right)
\right\vert ^{q}+\left( t-a\right) ^{\alpha +1}\left\vert f^{\prime }\left(
b\right) \right\vert ^{q}dt\right) ^{1/q}\right. \\
&&+\left. \frac{\left\Vert g\right\Vert _{\left[ \frac{a+b}{2},b\right]
,\infty }}{\left( b-a\right) ^{1/q}}\left( \int_{\frac{a+b}{2}}^{b}\left(
b-t\right) ^{\alpha +1}\left\vert f^{\prime }\left( a\right) \right\vert
^{q}+\left( b-t\right) ^{\alpha }\left( t-a\right) \left\vert f^{\prime
}\left( b\right) \right\vert ^{q}dt\right) ^{1/q}\right\} \\
&\leq &\frac{\left( b-a\right) ^{\alpha +1}}{2^{\alpha +\frac{1}{q}}\left(
\alpha +1\right) \left( \alpha +2\right) ^{1/q}\Gamma (\alpha +1)}\left\{
\left\Vert g\right\Vert _{\left[ a,\frac{a+b}{2}\right] ,\infty }\left(
(\alpha +3)\left\vert f^{\prime }\left( a\right) \right\vert ^{q}+\left(
\alpha +1\right) \left\vert f^{\prime }\left( b\right) \right\vert
^{q}dt\right) ^{1/q}\right. \\
&&+\left. \left\Vert g\right\Vert _{\left[ \frac{a+b}{2},b\right] ,\infty
}\left( \left( \alpha +1\right) \left\vert f^{\prime }\left( a\right)
\right\vert ^{q}+(\alpha +3)\left\vert f^{\prime }\left( b\right)
\right\vert ^{q}dt\right) ^{1/q}\right\} \\
&\leq &\frac{\left( b-a\right) ^{\alpha +1}\left\Vert g\right\Vert _{\left[
a,b\right] ,\infty }}{2^{\alpha +1+\frac{1}{q}}\left( \alpha +1\right)
\left( \alpha +2\right) ^{1/q}\Gamma (\alpha +1)}\left\{ \left( (\alpha
+3)\left\vert f^{\prime }\left( a\right) \right\vert ^{q}+\left( \alpha
+1\right) \left\vert f^{\prime }\left( b\right) \right\vert ^{q}dt\right)
^{1/q}\right. \\
&&+\left. \left( \left( \alpha +1\right) \left\vert f^{\prime }\left(
a\right) \right\vert ^{q}+(\alpha +3)\left\vert f^{\prime }\left( b\right)
\right\vert ^{q}dt\right) ^{1/q}\right\}
\end{eqnarray*}%
where it is easily seen that%
\begin{eqnarray*}
&&\int_{a}^{\frac{a+b}{2}}\left\vert \int_{a}^{t}\left( s-a\right) ^{\alpha
-1}ds\right\vert dt=\int_{\frac{a+b}{2}}^{b}\left\vert \int_{b}^{t}\left(
b-s\right) ^{\alpha -1}ds\right\vert dt \\
&=&\frac{\left( b-a\right) ^{\alpha +1}}{2^{\alpha +1}\alpha \left( \alpha
+1\right) }.
\end{eqnarray*}

Hence, the proof is completed.
\end{proof}

We can state another inequality for $q>1$ as follows:

\begin{theorem}
\label{2.6} Let $f:I\mathbb{\rightarrow R}$ be a differentiable mapping on $%
I^{\circ }$ and $f^{\prime }\in L\left[ a,b\right] $ with $a<b$ and let $g:%
\left[ a,b\right] \mathbb{\rightarrow R}$ is continuous. If $\left\vert
f^{\prime }\right\vert ^{q}$ is convex on $\left[ a,b\right] ,$ $q>1$, then
the following inequality for fractional integrals holds:%
\begin{eqnarray}
&&\left\vert f\left( \frac{a+b}{2}\right) \left[ J_{\left( \frac{a+b}{2}%
\right) -}^{\alpha }g(a)+J_{\left( \frac{a+b}{2}\right) +}^{\alpha }g(b)%
\right] \right.  \notag \\
&&\left. -\left[ J_{\left( \frac{a+b}{2}\right) -}^{\alpha }\left( fg\right)
(a)+J_{\left( \frac{a+b}{2}\right) +}^{\alpha }\left( fg\right) (b)\right]
\right\vert  \notag \\
&\leq &\frac{\left\Vert g\right\Vert _{\infty }\left( b-a\right) ^{\alpha +1}%
}{2^{\alpha +1+\frac{2}{q}}(\alpha p+1)^{1/p}\Gamma (\alpha +1)}  \label{2-6}
\\
&&\times \left[ \left( 3\left\vert f^{\prime }\left( a\right) \right\vert
^{q}+\left\vert f^{\prime }\left( b\right) \right\vert ^{q}\right)
^{1/q}+\left( \left\vert f^{\prime }\left( a\right) \right\vert
^{q}+3\left\vert f^{\prime }\left( b\right) \right\vert ^{q}\right) ^{1/q}%
\right]  \notag
\end{eqnarray}%
where $1/p+1/q=1.$
\end{theorem}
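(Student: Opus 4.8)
The plan is to feed the integral identity of Lemma \ref{2.3} into H\"older's inequality (in contrast to the power-mean inequality used for the preceding theorem), and then to invoke the convexity of $|f'|^{q}$ to evaluate the resulting integrals. First I would apply Lemma \ref{2.3}, pass to absolute values, split the integral at $(a+b)/2$, and bound $|g(s)|\le\|g\|_{\infty}$ inside each inner integral. Computing $\int_a^t(s-a)^{\alpha-1}ds=(t-a)^{\alpha}/\alpha$ and $\int_t^b(b-s)^{\alpha-1}ds=(b-t)^{\alpha}/\alpha$ and using $\alpha\Gamma(\alpha)=\Gamma(\alpha+1)$ reduces the left-hand side to at most
\[
\frac{\|g\|_{\infty}}{\Gamma(\alpha+1)}\left[\int_a^{\frac{a+b}{2}}(t-a)^{\alpha}|f'(t)|\,dt+\int_{\frac{a+b}{2}}^{b}(b-t)^{\alpha}|f'(t)|\,dt\right].
\]

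Next I would apply H\"older's inequality with exponents $p$ and $q$ to each of the two integrals. For the first one this gives the product of $\bigl(\int_a^{(a+b)/2}(t-a)^{\alpha p}dt\bigr)^{1/p}$ and $\bigl(\int_a^{(a+b)/2}|f'(t)|^{q}dt\bigr)^{1/q}$. The weight factor equals
\[
\left(\frac{(b-a)^{\alpha p+1}}{2^{\alpha p+1}(\alpha p+1)}\right)^{1/p}=\frac{(b-a)^{\alpha+1/p}}{2^{\alpha+1/p}(\alpha p+1)^{1/p}},
\]
and for the $|f'|^{q}$ factor I would substitute the convexity bound $|f'(t)|^{q}\le\frac{b-t}{b-a}|f'(a)|^{q}+\frac{t-a}{b-a}|f'(b)|^{q}$. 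The elementary integrals $\int_a^{(a+b)/2}(b-t)\,dt=3(b-a)^{2}/8$ and $\int_a^{(a+b)/2}(t-a)\,dt=(b-a)^{2}/8$ then yield $\int_a^{(a+b)/2}|f'(t)|^{q}dt\le\frac{b-a}{8}\bigl(3|f'(a)|^{q}+|f'(b)|^{q}\bigr)$; the symmetric computation on $[(a+b)/2,b]$ produces $\frac{b-a}{8}\bigl(|f'(a)|^{q}+3|f'(b)|^{q}\bigr)$.

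The remaining work, which is the only place requiring care, is purely bookkeeping of the exponents of $(b-a)$ and of $2$. Using $1/p+1/q=1$, the total power of $(b-a)$ is $\alpha+1/p+1/q=\alpha+1$, while the power of $2$ is $\alpha+1/p+3/q=\alpha+1+2/q$ (the extra $2/q$ coming from $8^{1/q}=2^{3/q}$, since $1/p+3/q=1+2/q$). Adding the two H\"older estimates, multiplying by $\|g\|_{\infty}/\Gamma(\alpha+1)$, and factoring out the common constant leaves exactly the bracketed sum $\bigl(3|f'(a)|^{q}+|f'(b)|^{q}\bigr)^{1/q}+\bigl(|f'(a)|^{q}+3|f'(b)|^{q}\bigr)^{1/q}$ multiplied by the coefficient $\frac{\|g\|_{\infty}(b-a)^{\alpha+1}}{2^{\alpha+1+2/q}(\alpha p+1)^{1/p}\Gamma(\alpha+1)}$, which is the asserted bound (\ref{2-6}). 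I anticipate no genuine obstacle beyond keeping this exponent arithmetic straight.
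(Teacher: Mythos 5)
Your proposal is correct and follows essentially the same route as the paper: Lemma \ref{2.3}, then H\"older's inequality with exponents $p,q$ on each half-interval, then the convexity bound on $|f'|^{q}$, with all the elementary integrals and the exponent bookkeeping matching the paper's. The only (immaterial) difference is that you pull $\Vert g\Vert_{\infty}$ out of the kernel before applying H\"older, whereas the paper applies H\"older to the full kernel $k(t)$ and bounds $g$ afterwards; the resulting constants are identical.
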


\begin{proof}
Using Lemma \ref{2.3}, H\"{o}lder's inequality and the convexity of $%
\left\vert f^{\prime }\right\vert ^{q}$, it follows that%
\begin{eqnarray*}
&&\left\vert f\left( \frac{a+b}{2}\right) \left[ J_{\left( \frac{a+b}{2}%
\right) -}^{\alpha }g(a)+J_{\left( \frac{a+b}{2}\right) +}^{\alpha }g(b)%
\right] -\left[ J_{\left( \frac{a+b}{2}\right) -}^{\alpha }\left( fg\right)
(a)+J_{\left( \frac{a+b}{2}\right) +}^{\alpha }\left( fg\right) (b)\right]
\right\vert \\
&\leq &\frac{1}{\Gamma (\alpha )}\left( \int_{a}^{\frac{a+b}{2}}\left\vert
\int_{a}^{t}\left( s-a\right) ^{\alpha -1}g(s)ds\right\vert ^{p}dt\right)
^{1/p}\left( \int_{a}^{\frac{a+b}{2}}\left\vert f^{\prime }(t)\right\vert
^{q}dt\right) ^{1/q} \\
&&+\frac{1}{\Gamma (\alpha )}\left( \int_{\frac{a+b}{2}}^{b}\left\vert
\int_{b}^{t}\left( b-s\right) ^{\alpha -1}g(s)ds\right\vert ^{p}dt\right)
^{1/p}\left( \int_{\frac{a+b}{2}}^{b}\left\vert f^{\prime }(t)\right\vert
^{q}dt\right) ^{1/q}
\end{eqnarray*}%
\begin{eqnarray*}
&\leq &\frac{\left( b-a\right) ^{\frac{1}{q}}\left\Vert g\right\Vert _{\left[
a,\frac{a+b}{2}\right] ,\infty }}{\Gamma (\alpha )}\left( \int_{a}^{\frac{a+b%
}{2}}\left\vert \int_{a}^{t}\left( s-a\right) ^{\alpha -1}ds\right\vert
^{p}dt\right) ^{1/p}\left[ \frac{3\left\vert f^{\prime }\left( a\right)
\right\vert ^{q}+\left\vert f^{\prime }\left( b\right) \right\vert ^{q}}{8}%
\right] ^{1/q} \\
&&+\frac{\left( b-a\right) ^{\frac{1}{q}}\left\Vert g\right\Vert _{\left[ a,%
\frac{a+b}{2}\right] ,\infty }}{\Gamma (\alpha )}\left( \int_{\frac{a+b}{2}%
}^{b}\left\vert \int_{b}^{t}\left( b-s\right) ^{\alpha -1}ds\right\vert
^{p}dt\right) ^{1/p}\left[ \frac{\left\vert f^{\prime }\left( a\right)
\right\vert ^{q}+3\left\vert f^{\prime }\left( b\right) \right\vert ^{q}}{8}%
\right] ^{1/q} \\
&\leq &\frac{\left\Vert g\right\Vert _{\infty }\left( b-a\right) ^{\alpha +1}%
}{2^{\alpha +1+\frac{2}{q}}(\alpha p+1)^{1/p}\Gamma (\alpha +1)}\left[
\left( 3\left\vert f^{\prime }\left( a\right) \right\vert ^{q}+\left\vert
f^{\prime }\left( b\right) \right\vert ^{q}\right) ^{1/q}+\left( \left\vert
f^{\prime }\left( a\right) \right\vert ^{q}+3\left\vert f^{\prime }\left(
b\right) \right\vert ^{q}\right) ^{1/q}\right] .
\end{eqnarray*}%
Here we use%
\begin{equation*}
\int_{a}^{\frac{a+b}{2}}\left\vert \int_{a}^{t}\left( s-a\right) ^{\alpha
-1}ds\right\vert ^{p}dt=\frac{\left( b-a\right) ^{\alpha p+1}}{2^{\alpha
p+1}(\alpha p+1)\alpha ^{p}},
\end{equation*}%
\begin{eqnarray*}
\int_{a}^{\frac{a+b}{2}}\left\vert f^{\prime }(t)\right\vert ^{q}dt &\leq &%
\frac{1}{b-a}\int_{a}^{\frac{a+b}{2}}\left[ \left( b-t\right) \left\vert
f^{\prime }\left( a\right) \right\vert ^{q}+\left( t-a\right) \left\vert
f^{\prime }\left( b\right) \right\vert ^{q}\right] dt \\
&=&\left( b-a\right) \frac{3\left\vert f^{\prime }\left( a\right)
\right\vert ^{q}+\left\vert f^{\prime }\left( b\right) \right\vert ^{q}}{8}
\end{eqnarray*}%
and%
\begin{eqnarray*}
\int_{\frac{a+b}{2}}^{b}\left\vert f^{\prime }(t)\right\vert ^{q}dt &\leq &%
\frac{1}{b-a}\int_{\frac{a+b}{2}}^{b}\left[ \left( b-t\right) \left\vert
f^{\prime }\left( a\right) \right\vert ^{q}+\left( t-a\right) \left\vert
f^{\prime }\left( b\right) \right\vert ^{q}\right] dt \\
&=&\left( b-a\right) \frac{\left\vert f^{\prime }\left( a\right) \right\vert
^{q}+3\left\vert f^{\prime }\left( b\right) \right\vert ^{q}}{8}.
\end{eqnarray*}%
Hence the inequality (\ref{2-6}) is proved.
\end{proof}

\begin{remark}
\label{r3} If we choose $g(x)=1$ and $\alpha =1$ in Theorem \ref{2.6}, then
the inequality (\ref{2-6}) reduces to (\ref{eq4}).
\end{remark}

\end{document}